\def\codim{\mbox{codim}}
\def\R{\mathbb{R}}
\def\C{\mathbb{C}}
\def\vv<#1>{\langle#1\rangle}
\def\XXint#1#2{\setbox0=\hbox{$#1{#2}{\int}$}{#2}\kern-.5\wd0 }
\def\XXint#1#2#3{{\setbox0=\hbox{$#1{#2#3}{\int}$}
     \vcenter{\hbox{$#2#3$}}\kern-.5\wd0}}
\def\vv<#1>{\langle#1\rangle}
\newtheorem{thm}{Theorem}[section]
\newtheorem{lem}{Lemma}[section]
\newtheorem{cor}{Corollary}[section]
\theoremstyle{definition}
\theoremstyle{remark}
\numberwithin{equation}{section}
\begin{document}

\title{Nonpositively curved Hermitian metrics on product manifolds}

\keywords{complex products, K\"ahler manifolds, bisectional
curvature, nonpositive curvature}

\begin{abstract} In this article, we classify all the Hermitian
metrics on a complex product manifold $M=X\times Y$ with nonpositive
 holomorphic bisectional curvature. It is a generalization of a result by Zheng
\cite{Zheng93}.
\end{abstract}

\renewcommand{\subjclassname}{\textup{2000} Mathematics Subject Classification}
 \subjclass[2000]{Primary 53B25; Secondary 53C40}
\author{ Chengjie Yu}
\address{Department of Mathematics, Shantou University, Shantou, Guangdong, China } \email{cjyu@stu.edu.cn}

\date{Jan 2010}

\maketitle \markboth{ Chengjie Yu}
 {Nonpositively curved Hermitian metrics}
 In this article, using a trick as in Tam-Yu\cite{TY}, we prove the following generalization of the result theorem in Zheng
 \cite{Zheng93}.

 \begin{thm}\label{th-3}
Let $M=X\times Y$ with $X$ and $Y$ both compact complex manifolds.
Let $\phi_1,\phi_2,\cdots, \phi_r$ be a basis of $H^{1,0}(X)$ and
$\psi_1,\psi_2,\cdots,\psi_s$ be a basis of $H^{1,0}(Y)$. Then, for
any Hermitian metric $h$ on $M$ with nonpositive holomorphic
bisectional curvature,
\begin{equation*}
\omega_h=\pi_1^{*}\omega_{h_1}+\pi_2^{*}\omega_{h_2}+\rho+\bar\rho
\end{equation*}
where $h_1$ and $h_2$ are Hermitian metrics on $X$ and  $Y$ with
nonpositive holomorphic bisectional curvature respectively, $\pi_1$
and $\pi_2$ are natural projections from $M$ to $X$ and from $M$ to
$Y$ respectively, and
\begin{equation*}
\rho=\sqrt{-1}\sum_{k=1}^{r}\sum_{l=1}^{s}a_{kl}\phi_k\wedge\psi_l
\end{equation*}
with $a_{kl}$'s are complex numbers.
\end{thm}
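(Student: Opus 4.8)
The plan is to work in local product coordinates $z=(z^1,\dots,z^m)$ on $X$ and $w=(w^1,\dots,w^n)$ on $Y$ and to split the metric into its three natural blocks: the $X$-block $h_{i\bar j}$, the $Y$-block $h_{p\bar q}$, and the mixed block $g_{i\bar q}=h_{i\bar q}$ (Latin indices $i,j,k,l$ from $X$ and $p,q,r,s$ from $Y$). The assertion then unwinds into three claims: (i) $h_{i\bar j}$ depends only on $z$ and $h_{p\bar q}$ only on $w$, so that they define Hermitian metrics $h_1,h_2$ on the factors; (ii) $g_{i\bar q}$ is holomorphic in $z$ and anti-holomorphic in $w$; and (iii) $h_1,h_2$ are nonpositively curved. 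Throughout I use the Chern curvature
\[
R_{\alpha\bar\beta\gamma\bar\delta}=-\partial_\alpha\partial_{\bar\beta}h_{\gamma\bar\delta}+h^{\mu\bar\nu}\,\partial_\alpha h_{\gamma\bar\nu}\,\partial_{\bar\beta}h_{\mu\bar\delta},
\]
together with the hypothesis $R_{\alpha\bar\beta\gamma\bar\delta}v^\alpha\bar v^\beta u^\gamma\bar u^\delta\le0$ for all $v,u$.

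The decisive step, which is the trick taken from \cite{TY}, is (i). Fix an arbitrary $Y$-vector $b$ and set $f_b=\sum_{p,q}h_{p\bar q}b^p\bar b^q$. Taking $v$ an arbitrary $X$-direction and $u=b$ in the curvature inequality, and noting that the quadratic term $h^{\mu\bar\nu}\partial_\alpha h_{\gamma\bar\nu}\partial_{\bar\beta}h_{\mu\bar\delta}$ contracted against $v^\alpha\bar v^\beta b^\gamma\bar b^\delta$ is a manifestly nonnegative Hermitian form in $v$ (a squared length in the positive-definite form $h^{\mu\bar\nu}$), I conclude that the complex Hessian $(\partial_{z^i}\partial_{\bar z^j}f_b)_{ij}$ is positive semidefinite. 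Hence, for fixed $w$, the function $f_b(\cdot,w)$ is plurisubharmonic on the compact manifold $X$, and therefore constant. Letting $b$ range over all $Y$-vectors shows every $h_{p\bar q}$ is independent of $z$, and the symmetric computation (exchanging the two factors) shows every $h_{i\bar j}$ is independent of $w$. Taking $h_1=(h_{i\bar j}(z))$ and $h_2=(h_{p\bar q}(w))$, which are positive as principal blocks of the positive matrix $h$, produces the first two summands $\pi_1^{*}\omega_{h_1}+\pi_2^{*}\omega_{h_2}$.

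For (ii) I feed the decoupling back into the curvature. With $v\in TX$, $u\in TY$ and the diagonal blocks now independent of the other variable, the Hessian term $\partial_\alpha\partial_{\bar\beta}h_{\gamma\bar\delta}$ vanishes and the mixed bisectional curvature reduces to the single quadratic term, which is simultaneously $\le0$ (hypothesis) and $\ge0$ (a squared length); it therefore vanishes identically, which forces $\partial_{\bar z^i}g_{k\bar p}=0$, i.e. $g$ is holomorphic in $z$. The mirror computation with the factors exchanged gives $\partial_{w^p}g_{i\bar q}=0$, i.e. $g$ is anti-holomorphic in $w$. To assemble $\rho$, fix $w$ and a $Y$-index $q$: the $(1,0)$-form $\theta_q=\sum_i g_{i\bar q}\,dz^i$ is globally defined on $X$ (its single lower $X$-index transforms tensorially) and $\bar\partial_X$-closed, hence a global holomorphic $1$-form lying in $H^{1,0}(X)=\langle\phi_1,\dots,\phi_r\rangle$; its coefficients in this basis are anti-holomorphic in $w$ because $g$ is. Running the symmetric argument on the $Y$-leg and using finite dimensionality of $H^{1,0}(X)$ and $H^{1,0}(Y)$, separation of variables then yields $g_{i\bar q}=\sum_{k,l}a_{kl}\,(\phi_k)_i\,\overline{(\psi_l)_q}$ with constants $a_{kl}$, i.e. $\rho=\sqrt{-1}\sum_{k,l}a_{kl}\,\phi_k\wedge\bar\psi_l$ is the mixed part of $\omega_h$.

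Finally, (iii): $h_1$ is precisely the metric induced on the compact complex submanifold $X\times\{w_0\}\subset(M,h)$, and $T^{1,0}X$ is a holomorphic subbundle of $T^{1,0}M$ along it, so by the curvature-decreasing property of the Chern connection on holomorphic subbundles one has $R^{h_1}(\xi,\bar\xi,\eta,\bar\eta)=R^{h}(\xi,\bar\xi,\eta,\bar\eta)-|II(\xi,\eta)|^2\le0$ for $\xi,\eta\in T^{1,0}X$, and likewise for $h_2$. I expect the genuine obstacle to be the squeeze underlying (i) and the first half of (ii): the point is that nonpositive bisectional curvature bounds the relevant quantity from above while the intrinsic nonnegativity of the quadratic Chern term bounds it from below, so compactness of the factors (via the maximum principle for plurisubharmonic functions, respectively the vanishing of a squared length) forces the rigidity. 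Once this two-sided squeeze is set up, the remaining assembly into harmonic forms is bookkeeping.
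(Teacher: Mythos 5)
Your proposal is correct and follows essentially the same route as the paper: the maximum principle applied to the $Y$-block restricted to $X\times\{q\}$ (you phrase it via plurisubharmonicity of $f_b$, the paper via the complex Laplacian of $h_{\alpha\bar\alpha}$, which is the same squeeze between the nonpositive curvature and the nonnegative quadratic Chern term), then extraction of $\partial_{\bar z^i}h_{j\bar q}=0$ and $\partial_{w^p}h_{i\bar q}=0$ from the vanishing of that quadratic term, curvature decrease on the submanifolds $X\times\{q\}$ and $\{p\}\times Y$ for the factor metrics, and finally the identification of the mixed block as a holomorphic $2$-form on $X\times\overline{Y}$ decomposed by separation of variables exactly as in the paper's lemma. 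The only cosmetic difference is that you recover the vanishing of the mixed first derivatives in a second pass through the curvature inequality rather than reading it off at once from the vanishing Laplacian.
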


Before the proof of Theorem \ref{th-3}, we need the following lemma.
\begin{lem}
Let $X^m$ and $Y^n$ be two compact complex manifolds. Let
$\phi_1,\phi_2,\cdots,\phi_{r}$ be a basis of $H^{1,0}(X)$ and
$\psi_1,\psi_2,\cdots,\psi_{s}$ be a basis of $H^{1,0}(Y)$. Let
$$
\rho=\rho_{ij}(x,y)dx^i\wedge dy^j
$$
be a global holomorphic two form on $X\times Y$, where
$(x^1,x^2,\cdots,x^m)$ is a local holomorphic coordinate of $X$, and
$(y^1,y^2,\cdots,y^n)$ is a local holomorphic coordinate of $Y$.
Then
\begin{equation}
\rho=\sum_{k=1}^{r}\sum_{l=1}^{s}a_{kl}\phi_k\wedge\psi_l
\end{equation}
where $a_{kl}$'s are complex numbers.
\end{lem}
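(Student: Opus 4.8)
The plan is to peel off the two factors one at a time, reducing the holomorphic dependence on each factor to a finite-dimensional linear-algebra statement governed by compactness. The conceptual content is the K\"unneth-type identity $H^0(X\times Y,\ \pi_1^*\Omega_X^1\otimes\pi_2^*\Omega_Y^1)\cong H^{1,0}(X)\otimes H^{1,0}(Y)$, and I would establish it by hand rather than invoke it.

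First I would fix a point $y_0\in Y$ and a holomorphic coordinate chart about it, and rewrite the local expression as $\rho=\sum_j \eta_j\wedge dy^j$ with $\eta_j=\sum_i\rho_{ij}(x,y)\,dx^i$. The first claim is that, for each fixed $y$ and each fixed index $j$ relative to the chosen $Y$-chart, $\eta_j$ is a \emph{globally defined} holomorphic $1$-form on $X$. This follows by matching coefficients of the covectors $dy^j$ (which are common to all $X$-charts) in the transformation law for the global form $\rho$: on an overlap of $X$-charts the factors $\p x^i/\p\tilde x^a$ are precisely those making $\eta_j$ transform as a $1$-form on $X$. Thus $\eta_j(\cdot,y)\in H^{1,0}(X)$ for every $y$, and since $\rho_{ij}$ is holomorphic jointly in $(x,y)$ the assignment $y\mapsto \eta_j(\cdot,y)$ is a holomorphic map into the finite-dimensional space $H^{1,0}(X)$.

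Next I would use the basis. Writing $\eta_j(\cdot,y)=\sum_k c_{jk}(y)\phi_k$ in the fixed basis $\phi_1,\dots,\phi_r$, the coefficients $c_{jk}$ are holomorphic in $y$ near $y_0$: applying a fixed linear functional dual to the $\phi_k$ (realized by finitely many coefficient evaluations on $X$, which separate this finite-dimensional space) expresses $c_{jk}(y)$ as a linear combination of the holomorphic functions $\rho_{ij}(\cdot,y)$ evaluated at fixed points, hence holomorphic. Substituting gives $\rho=\sum_k \phi_k\wedge\beta_k$, where locally $\beta_k=\sum_j c_{jk}(y)\,dy^j$. A short check with the transformation law across $Y$-charts, using uniqueness of the expansion in the fixed basis $\phi_k$, shows that each $\beta_k$ is independent of the chart and therefore a globally defined holomorphic $1$-form on $Y$, i.e.\ $\beta_k\in H^{1,0}(Y)$.

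Finally, since $\psi_1,\dots,\psi_s$ is a basis of $H^{1,0}(Y)$, each global holomorphic $1$-form $\beta_k$ is a \emph{constant} combination $\beta_k=\sum_l a_{kl}\psi_l$ with $a_{kl}\in\C$; here compactness of $Y$ enters through the finite-dimensionality of $H^{1,0}(Y)$. Combining, $\rho=\sum_{k,l}a_{kl}\,\phi_k\wedge\psi_l$, as claimed. The main obstacle is not the final linear algebra but the two globalization steps: verifying that the partially contracted forms $\eta_j$ and the reassembled coefficients $\beta_k$ are genuinely chart-independent holomorphic forms on the respective factors, and that the expansion coefficients depend holomorphically on the remaining variable. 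This is exactly the hands-on content of the K\"unneth decomposition, and it is where compactness of $X$ and $Y$, ensuring that the spaces $H^{1,0}$ are finite-dimensional with a fixed basis, is essential.
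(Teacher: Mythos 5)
Your proposal is correct and follows essentially the same route as the paper: contract $\rho$ against the $dy^j$ to get global holomorphic $1$-forms on $X$, expand them in the basis $\phi_k$ with coefficients holomorphic in $y$, observe that the resulting coefficient $1$-forms on $Y$ are global and hence constant combinations of the $\psi_l$. You simply make explicit the chart-independence and holomorphic-dependence checks that the paper dismisses with ``it is clear.''
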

\begin{proof}
Fix a local holomorphic coordinate $(y^1,y^2,\cdots,y^n)$ of $Y$, it
is clear that
\begin{equation*}
\theta_j=\sum_{i=1}^{m}\rho_{ij}(x,y)dx^i
\end{equation*}
is a global homomorphic 1-form on $X\times\{y\}$. Then
\begin{equation}
\theta_j=\sum_{k=1}^{r}b_{kj}(y)\phi_k
\end{equation}
where $b_{kj}$'s are local homomorphic functions on $Y$.

It is clear that
$$
\sum_{j=1}^{n}b_{kj}(y)dy^{j}
$$
is a global holomorphic 1-form on $Y$ for each $k$. So,
\begin{equation}
\sum_{j=1}^{n}b_{kj}(y)dy^{j}=\sum_{l=1}^{s}a_{kl}\psi_l
\end{equation}
where $a_{kl}$'s are complex numbers. Therefore
\begin{equation}
\rho=\sum_{j=1}^{n}\theta_j\wedge
dy^{j}=\sum_{k}^{r}\sum_{l=1}^{s}a_{kl}\phi_k\wedge\psi_l.
\end{equation}
\end{proof}

\begin{proof}[Proof of Theorem \ref{th-3}]
Let $(z^{m+1},\cdots,z^{m+n})$ be a local holomorphic coordinate of
$Y$ at $q$. Then, it is clear that
\begin{equation}
h_{\alpha\bar \alpha}(x,q)
\end{equation}
is a positive function on $X\times \{q\}$, where $m+1\leq\alpha\leq
m+n$.

Let $\Delta$ be the complex Laplacian on $X\times \{q\}$ and
$(z^1,z^2,\cdots,z^m)$ be a holomorphic coordinate of $X$ such that
$$h_{i\bar j}(x,q)=\delta_{i\bar j}$$
with $1\leq i, j\leq m$. Then
\begin{equation}
\Delta
h_{\alpha\bar\alpha}(x,q)=\sum_{i=1}^{m}\partial_{i}\partial_{\bar
i}h_{\alpha\bar\alpha}=-\sum_{i=1}^{m}R_{\alpha\bar\alpha i\bar
i}+\sum_{i=1}^mh^{\bar ba}\partial_ih_{\alpha\bar b}\partial_{\bar
i}h_{a\bar \alpha}\geq 0,
\end{equation}
with $1\leq a,b\leq n+m$. By maximum principle,
$h_{\alpha\bar\alpha}(x,q)$ is a constant function. Hence
\begin{equation}\label{eqn-d-i}
\partial_ih_{\alpha\bar b}=0.
\end{equation}
Interchange the roles of $X$ and $Y$ in the above, we get
\begin{equation}\label{eqn-d-alpha}
\partial_\alpha h_{i\bar b}=0.
\end{equation}
By the \eqref{eqn-d-i}, we know that
\begin{equation}
\partial_{i}h_{\alpha\bar\beta}=0
\end{equation}
for any $m+1\leq\alpha,\beta\leq n+m$. So, $h_{\alpha\bar\beta}$ is
independent of $z^{i}$'s. Then, $h_{\alpha\bar\beta}$ is a Hermitian
metric on $Y$. It is clear that $h_{\alpha\bar\beta}$ as a Hermitian
metric on $Y$ is of nonpositive holomorphic bisectional curvature
since holomorphic bisectional curvature deceases on complex
submanifolds. We denote this metric as $h_2$.

Similarly, by \eqref{eqn-d-alpha} $h_{i\bar j}$ is a Hermitian
metric on $X$ with nonpositive holomorphic bisectional curvature. We
denote it as $h_1$.

By \eqref{eqn-d-alpha} and \eqref{eqn-d-i}, we have
\begin{equation*}
\partial_{\alpha}h_{i\bar\beta}=0,\ \mbox{and}\ \partial_{\bar
i}h_{j\bar\alpha}=0.
\end{equation*}
So, the form $h_{i\bar\alpha}dz^i\wedge dz^{\bar\alpha}$ is a
holomorphic two form on $M_1\times \overline{M_2}$ where
$\overline{M_2}$ is the complex conjugate of $M_2$. By the lemma
above, we know that
\begin{equation}
h_{i\bar\alpha}dz^i\wedge
dz^{\bar\alpha}=\sum_{k=1}^{q_1}\sum_{l=1}^{q_2}a_{kl}\phi_k\wedge\bar\psi_l.
\end{equation}
Hence, we get the conclusion.
\end{proof}

The same as in Zheng \cite{Zheng93}, we have the following
consequence of the theorem.
\begin{cor}
$$\codim_\R(\mathcal{H}(M_1)\times
\mathcal{H}(M_2),\mathcal{H}(M_1\times M_2))=2h^{1,0}(M_1)\cdot
h^{1,0}(M_2)$$ where $M_1,M_2$ are compact complex manifolds, and
suppose that $\mathcal H(M_i)\neq\emptyset$ for $i=1,2$.
\end{cor}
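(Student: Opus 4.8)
The plan is to read off the codimension directly from the decomposition in Theorem \ref{th-3}, which realizes every metric in $\mathcal{H}(M_1\times M_2)$ as a product metric corrected by a finite-dimensional family of holomorphic mixed terms. Write $r=h^{1,0}(M_1)$ and $s=h^{1,0}(M_2)$. To each $h\in\mathcal{H}(M_1\times M_2)$ Theorem \ref{th-3} attaches a triple $(h_1,h_2,a)$ with $h_1\in\mathcal{H}(M_1)$, $h_2\in\mathcal{H}(M_2)$ and $a=(a_{kl})\in\C^{rs}$, through $\omega_h=\pi_1^{*}\omega_{h_1}+\pi_2^{*}\omega_{h_2}+\rho+\bar\rho$. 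I would first record that this assignment is well defined and injective: under the product splitting the three summands occupy the three distinct coordinate blocks, so $\pi_1^{*}\omega_{h_1}$, $\pi_2^{*}\omega_{h_2}$ and $\rho+\bar\rho$ are each recovered from $\omega_h$ by type, and the constants $a_{kl}$ are then uniquely pinned down because the mixed part is a holomorphic two form on $M_1\times\overline{M_2}$, to which the preceding Lemma applies and yields unique coefficients. Since a product of two nonpositively curved metrics is again nonpositively curved, $\mathcal{H}(M_1)\times\mathcal{H}(M_2)$ embeds into $\mathcal{H}(M_1\times M_2)$, and its image is exactly the slice $\{a=0\}$. This already gives $\codim_\R\le 2rs$, since the transverse part of any metric is confined to the $2rs$-real-dimensional space $\C^{rs}$.

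For the reverse inequality I must exhibit a full $2rs$-real-dimensional family of admissible corrections, i.e. show that for fixed $h_1,h_2$ the set of $a$ for which $g_a:=\pi_1^{*}\omega_{h_1}+\pi_2^{*}\omega_{h_2}+\rho+\bar\rho$ lies in $\mathcal{H}(M_1\times M_2)$ is an open neighborhood of $0$ in $\C^{rs}$. Positivity of $g_a$ is the easy half: at $a=0$ the form is the positive-definite product metric, the off-diagonal block $h_{i\bar\alpha}=\sum_{k,l}a_{kl}(\phi_k)_i\overline{(\psi_l)_\alpha}$ is bounded on the compact $M_1\times M_2$ and linear in $a$, so a Schur-complement estimate keeps $g_a$ positive definite once $|a|$ is small.

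The main obstacle is the curvature half: I must check that the holomorphic mixed perturbation $\rho+\bar\rho$ preserves nonpositivity of the holomorphic bisectional curvature for $a$ near $0$, for otherwise the transverse slice could be thinner than $2rs$. Here I would compute the Chern curvature $R_{A\bar B C\bar D}=-\partial_C\partial_{\bar D}h_{A\bar B}+h^{\bar QP}\partial_C h_{A\bar Q}\partial_{\bar D}h_{P\bar B}$ of $g_a$ and lean on the rigidity already extracted in the proof of Theorem \ref{th-3}: $h_{i\bar j}$ depends only on the $X$-variables, $h_{\alpha\bar\beta}$ only on the $Y$-variables, and $h_{i\bar\alpha}$ is holomorphic in $x$ and antiholomorphic in $y$. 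These vanishing properties eliminate the cross second derivatives, so that the bisectional curvature of $g_a$ equals that of the product metric plus correction terms that vanish as $a\to 0$ and arise only from the first-derivative (connection) piece; together with $\mathcal{H}(M_i)\neq\emptyset$ and the nonpositivity of $h_1,h_2$ this should force nonpositive bisectional curvature for small $a$. The flat case $X=Y=$ a complex torus, where $g_a$ is literally flat for every positive $a$, is the guiding model and a check that the slice is genuinely open. Granting this, the correspondence identifies a neighborhood of $\mathcal{H}(M_1)\times\mathcal{H}(M_2)$ in $\mathcal{H}(M_1\times M_2)$ with $\mathcal{H}(M_1)\times\mathcal{H}(M_2)\times U$ for $U\subseteq\C^{rs}$ open about $0$, so $\codim_\R=\dim_\R\C^{rs}=2rs=2h^{1,0}(M_1)\cdot h^{1,0}(M_2)$.
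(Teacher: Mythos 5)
Your upper bound is fine and agrees with the paper: the decomposition of Theorem \ref{th-3}, together with the uniqueness supplied by the Lemma, gives a well-defined assignment $h\mapsto (a_{kl})\in\C^{rs}$ killing exactly the product metrics, hence $\codim_\R\le 2rs$. The genuine gap is in your reverse inequality. You reduce it to the claim that, for \emph{fixed} $h_1,h_2$, the set of $a$ with $g_a\in\mathcal H(M_1\times M_2)$ is a neighborhood of $0$ in $\C^{rs}$, and at the decisive moment you only assert that the curvature computation ``should'' give nonpositivity for small $a$. It does not: nonpositivity of curvature is a closed condition, not an open one, and the perturbation pushes the curvature the wrong way. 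Since $h_{i\bar j}$ is unchanged, $M_1\times\{q\}$ sits in $(M,g_a)$ with induced metric exactly $h_1$, so the Gauss equation gives $R^{g_a}_{i\bar i i\bar i}=R^{h_1}_{i\bar i i\bar i}+|II|^2$, where the second fundamental form is built from $\p_i h_{j\bar\alpha}=\sum_{k,l} a_{kl}\,\p_i(\phi_k)_j\,\overline{(\psi_l)_\alpha}$. If $h_1$ has vanishing holomorphic sectional curvature at some point in some direction where $\nabla\phi_k\neq 0$ (e.g.\ a genus $\ge 2$ curve carrying a nonpositively curved metric whose curvature vanishes at a simple zero of some $\phi_k$, times an elliptic curve), then $R^{g_a}_{1\bar 1 1\bar 1}>0$ there for \emph{every} $a\neq 0$, and the slice over that $(h_1,h_2)$ is just $\{0\}$. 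Your guiding flat-torus example is misleading precisely because there $II\equiv 0$.

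The paper obtains the lower bound without fixing $h_1,h_2$. The codimension is taken between linear spans of convex cones, so it suffices that the induced linear map $\Psi:\langle\mathcal H(M_1\times M_2)\rangle_\R\to M(q_1\times q_2;\C)$ be surjective, and for that one may let the product factors move. The key identity is
\begin{equation*}
\pi_{1}^*\omega_{h_1}+\pi_{2}^{*}\omega_{h_2}+\ii(\phi_k+\psi_l)\wedge\overline{(\phi_k+\psi_l)}
=[\pi_{1}^*\omega_{h_1}+\ii\phi_k\wedge\bar\phi_k]+[\pi_{2}^*\omega_{h_2}+\ii\psi_l\wedge\bar\psi_l]+\ii\phi_k\wedge\bar\psi_l+\ii\psi_l\wedge\bar\phi_k ,
\end{equation*}
whose left side lies in $\mathcal H(M_1\times M_2)$ because adding $\ii\theta\wedge\bar\theta$ for a holomorphic $1$-form $\theta$ preserves nonpositive holomorphic bisectional curvature; the modified product factors on the right lie in $\ker\Psi$ and are harmless, so $\Psi$ hits $E_{kl}$, and replacing $\phi_k$ by $\ii\phi_k$ hits $\ii E_{kl}$. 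Surjectivity plus rank--nullity then gives the codimension $2q_1q_2$. To repair your argument, replace the (false) openness of the fixed-$(h_1,h_2)$ slice by the construction of such a real spanning set of $M(q_1\times q_2;\C)$ in the image of $\Psi$.
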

\begin{proof} For any $h\in \mathcal H(M_1\times M_2)$, by the theorem, it has a
unique decomposition,
\begin{equation*}
\omega_h=\pi_1^{*}\omega_{h_1}+\pi_2^{*}\omega_{h_2}+\rho+\bar\rho
\end{equation*}
where $\rho=\sqrt{-1}\sum_{i=1}^{q_1}\sum_{j=1}^{q_2}a_{i
j}\phi_i\wedge\bar\psi_j$ with $a_{ij}\in \C$, $h_i\in \mathcal
H(M_i)$. So, we get a map
\begin{equation}
\mathcal H(M_1\times M_2)\to M(q_1\times q_2;\C),\ h\mapsto
(a_{ij})_{q_1\times q_2}.
\end{equation}
It is clear $\R^{+}$-linear. (Note that $\mathcal H(M_1\times M_2)$
is a convex cone.) So, it induce a linear map  of real vector
spaces,
\begin{equation*}
\Psi:\vv<\mathcal H(M_1\times M_2)>_\R\to M(q_1\times q_2;\C).
\end{equation*}
It is clear that
\begin{equation}
\ker \Psi=\vv<\mathcal H(M_1)\times\mathcal H(M_2)>_\R.
\end{equation}
Moreover, let $E_{k l}=(a_{ij})$ be such that $a_{i
j}=\delta_{ik}\delta_{jl}$. Note that
\begin{equation}
\begin{split}
&\pi_{1}^*\omega_{h_1}+\pi_{2}^{*}\omega_{h_2}+\sqrt{-1}(\phi_k+\psi_l)\wedge\overline{(\phi_k+\psi_l)}\\
=&[\pi_{1}^*\omega_{h_1}+\sqrt{-1}\phi_k\wedge\bar\phi_k]+[\pi_{1}^*\omega_{h_2}+\sqrt{-1}\psi_l\wedge\bar\psi_l]+\sqrt{-1}\phi_k\wedge\bar
\psi_l+\sqrt{-1}\psi_l\wedge\bar \phi_k.
\end{split}
\end{equation}
So, $E_{kl}$ is in the image of $\Psi$. Similarly, $\sqrt{-1}E_{kl}$
is also in the image of $\Psi$. Therefore, $\Psi$ is surjective. By
the dimension theorem in linear algebra, we get the identity.

\end{proof}

\end{document}